\newfont{\footsc}{cmcsc10 at 8truept}
\newfont{\footbf}{cmbx10 at 8truept}
\newfont{\footrm}{cmr10 at 10truept}
\renewcommand{\ps@plain}{%
\renewcommand{\@oddfoot}{\footsc {\footbf }  \footrm\thepage}}
\makeatother \pagestyle{plain} \leftmargin=25mm
\newtheorem{thm}{Theorem}[section]
\newtheorem{lem}[thm]{Lemma}
\theoremstyle{definition}
\theoremstyle{remark}
\newtheorem{rem}[thm]{Remark}
\DeclareMathOperator{\conv}{conv}
\DeclareMathOperator{\aff}{aff}
\DeclareMathOperator{\sgn}{sgn}
\title{On lattice coverings by locally anti-blocking bodies and polytopes with few vertices}
\date{}
\author{Matthias Schymura\footnote{
Institute of Mathematics, University of Rostock, Germany, E-mail:  matthias.schymura@uni-rostock.de.}, Jun Wang\footnote{Corresponding author. School of Mathematical Sciences \& Institute of Mathematics and Interdisciplinary Sciences, Tianjin Normal University, Tianjin, China, 300387. E-mail: junwang@tjnu.edu.cn. }, and Fei Xue\footnote{School of Mathematical Sciences, Nanjing Normal University, Nanjing, China, 210046. E-mail: 05429@njnu.edu.cn.} }
\begin{document}
\maketitle
\begin{center}\vskip -0.8cm
{\small 
 \vskip 0.3cm
}
\end{center}

\begin{abstract}In 2021, Ordentlich, Regev and Weiss made a breakthrough that the lattice covering density of any $n$-dimensional convex body is upper bounded by $cn^{2}$, improving on the best previous bound established by Rogers in 1959. However, for the Euclidean ball, Rogers obtained the better upper bound $n(\log_{e}n)^{c}$, and this result was extended to certain symmetric convex bodies by Gritzmann. The constant $c$ above is independent on $n$. In this paper, we show that such a bound can be achieved for more general classes of convex bodies without symmetry, including anti-blocking bodies, locally anti-blocking bodies and $n$-dimensional polytopes with $n+2$ vertices.
\\

{\bf Keywords:}  Lattice covering density, Cylinders, Locally anti-blocking bodies, Polytopes

\end{abstract}

\maketitle
\section{Introduction}

Let $K$ be an $n$-dimensional convex body in $\mathbb{R}^{n}$, i.e.~a compact and convex set with non-empty interior. Denote by $\mathcal{K}^{n}$ the set of convex bodies in $\mathbb{R}^{n}$. For a measurable subset $S \subseteq \mathbb{R}^n$, we denote by $V(S)$ its volume, that is, its Lebesgue measure. For a discrete point set~$X$, we call $\Re = K + X := \{K+\mathbf{x}:\mathbf{x}\in X\}$ a covering of space by translates of~$K$, if $K+X=\mathbb{R}^{n}$.
Writing $W_{n}=\{\mathbf{x} \in \mathbb{R}^n : 0\leq|x_{i}|\leq \frac{1}{2}, i=1,\ldots,n\}$ for the $n$-dimensional unit cube, we define the density of a covering $\Re$ as $$\theta(\Re)=\liminf\limits_{l\to\infty}\frac{V(\Re\cap lW_{n})}{V(lW_{n})}.$$
Then, the translative covering density of $K$ is defined as $\theta_{T}(K)=\inf\limits_{\Re} \theta(\Re)$, where the infimum ranges over all coverings $\Re$ by translates of~$K$.
In addition, if we restrict the translation vectors to form a lattice~$\Lambda$ and if we let $\mathcal{L}_K$ denote the family of all lattices $\Lambda$ such that $K+\Lambda$ is a covering of $\mathbb{R}^{n}$, then the lattice covering density of~$K$ is defined by $$\theta_{L}(K)=\inf\limits_{\Lambda \in \mathcal{L}_K}\frac{V(K)}{\det(\Lambda)},$$
where $\det(\Lambda)$ is the determinant of $\Lambda$.
The covering density is a measure of the efficiency of a given covering. Clearly, we have $$1\leq \theta_{T}(K)\leq \theta_{L}(K)<\infty.$$
For more information on covering densities, we refer to~\cite{Fej23,Goo17}.

If we want to prove an upper estimate on $\theta_{L}(K)$, one possible approach is to find a large inscribed body $M\subseteq K$ and turn to estimate $\theta_{L}(M)$.
Then, an upper bound on $\theta_L(K)$ follows because $\mathcal{L}_M \subseteq \mathcal{L}_K$ and thus
\begin{equation}\label{eq01}
\theta_L(K) \leq \frac{V(K)}{V(M)} \theta_L(M) \quad \textrm{ for all } \quad M \subseteq K.
\end{equation}

Let $B^{n}_{p}=\{\mathbf{x}:(|x_{1}|^{p}+|x_{2}|^{p}+\cdots+|x_{n}|^{p})^{\frac{1}{p}}\leq1\}$ be the $n$-dimensional unit $\ell_{p}$-ball and let $S^{n}$ be an $n$-dimensional regular simplex with unit edge lengths. For simplicity, the $n$-dimensional unit Euclidean ball is denoted by $B^{n}$. In lower dimensions, for $2\leq n\leq 5$, the exact value of $\theta_{L}(B^{n})$ has been determined (see~\cite{Sch06} for references). In particular, R.~Kershner~\cite{Ker39} proved that $\theta_{T}(B^{2})=\theta_{L}(B^{2})$. More generally, for all centrally symmetric convex bodies $C$ in $\mathbb{R}^{2}$, L.~Fejes T\'{o}th~\cite{Fej46,Fej50} showed that $\theta_{T}(C)=\theta_{L}(C)$. For non-symmetric ones, J.~Januszewski~\cite{Jan10} in 2010 proved that $\theta_{T}(S^{2})=\theta_{L}(S^{2})$. Later in 2015, K.~Sriamorn and F.~Xue~\cite{Sri15} generalized this result to a class of convex disks (quarter-convex disks, actually $2$-dimensional anti-blocking bodies), which includes triangles and convex quadrilaterals. Up to now, we still do not know if $\theta_{T}(K)=\theta_{L}(K)$ for all $K\in\mathcal{K}^{2}$. In 1990s, R.~Dougherty and V.~Faber~\cite{Dou04}, C.~M.~Fiduccia, R.~W.~Forcade and J.~S.~Zito~\cite{Fid98}, R.~W.~Forcade and J.~Lamoreaux~\cite{For00} considered the degree-diameter problem for Cayley graphs of Abelian groups for both directed graphs and undirected graphs, and gave upper bounds on $\theta_{L}(S^{3})$ and $\theta_{L}(B^{3}_{1})$. Lower bounds for $\theta_{L}(S^{3})$, $\theta_{L}(S^{4})$ and $\theta_{L}(S^{n})$ were studied by M.~Fu, F.~Xue, C.~Zong in recent years, see~\cite{Xue18,Fu23} for references.

In higher dimensions, based on periodic sets and a mean value argument, C. ~A.~Rogers~\cite{Rog57} in 1957 gave an upper bound on the translative covering density $\theta_{T}(K)$ for every $K\in \mathcal{K}^{n}$,
\begin{equation*}    
\theta_{T}(K)\leq n\log_{e}n+n\log_{e}\log_{e}n+5n.
\end{equation*}
Later in 1959, Rogers~\cite{Rog59} proved that for all convex bodies $K\in \mathcal{K}^{n}$, $$\theta_{L}(K)\leq n^{\log_{2}\log_{e}n+c}.$$
In 2021, O.~Ordentlich, O.~Regev and B.~Weiss~\cite{Ord21} improved Rogers' upper bound to
\begin{equation*}
\theta_{L}(K)\leq cn^{2}.
\end{equation*}
In both of these bounds and the bounds below, $c$ is some absolute constant whose actual value may differ in various places.

For the case of $B^{n}$, by estimating the lattice covering densities of $k$-cylinders and approximating convex bodies with inscribed $k$-cylinders, C.~A.~Rogers~\cite{Rog59} in 1959 obtained a better estimate,  \begin{equation}\label{eq03}
\theta_{L}(B^{n})\leq cn(\log_{e}n)^{\frac{1}{2}\log_{2}(2\pi e)}.
\end{equation}
As for the lower bound, H.~S.~M.~Coxeter, L.~Few and C.~A.~Rogers~\cite{Cox59} in 1959 proved that $$\frac{cn}{e\sqrt{e}} \leq \theta_{L}(B^{n}).$$
In 1985, P.~Gritzmann~\cite{Gri85} proved a bound similar to~\eqref{eq03} for a larger class of convex bodies.
He introduced the generalized cylinders to improve the approximation theorem for convex bodies with inscribed  $k$-fold cylinders in Rogers' method, and proved that 
\begin{equation}\label{eq04}
\theta_{L}(K)\leq cn(\log_{e}n)^{1+\log_{2}e},
\end{equation}
for every $K$ that has a non-singular affine image that is symmetric about at least $\log_{2}\log_{e}n+4$ coordinate hyperplanes.

In particular, the above inequality~\eqref{eq04} also holds for $B^{n}_{p}$ and $S^{n}$ (note that one can find  $\lceil n/2\rceil$ mutually orthogonal hyperplanes of symmetry in a suitable affine image of~$S^{n}$). In fact, by Gritzmann's result, we need not require very much symmetry. For example, in dimension $n=10^{9}$, one merely needs~$9$ hyperplanes of symmetry. This motivates us to consider the lattice covering densities for more general classes of convex bodies without symmetry assumptions.

In this paper, we give better estimates on lattice covering densities for all $n$-dimensional anti-blocking bodies, locally anti-blocking bodies and polytopes with $n+2$ vertices. The definition of (locally) anti-blocking bodies is given in Section~\ref{sec3}. Without using the symmetry hyperplanes of convex bodies, we construct cylinders inscribed in such convex bodies from another perspective. In Section~\ref{sec2}, we present two useful lemmas on an approximation result for convex bodies and an estimate of the lattice covering density for $k$-fold cylinders, which are crucial to our proof. In Section~\ref{sec3}, we give a brief introduction to anti-blocking bodies and locally anti-blocking bodies, and estimate their lattice covering densities. In Section~\ref{sec4}, we estimate the lattice covering density for any $n$-dimensional polytope with $n+2$ vertices.

The main theorems are stated as follows.
\begin{thm}\label{thm1}
 Let $F_{n}$ be an $n$-dimensional anti-blocking body in $\mathbb{R}^{n}$, then there is a constant $c$ such that 
 $$\theta_{L}(F_{n})\leq cn(\log_{e}n)^{1+\log_{2}e}.$$
\end{thm}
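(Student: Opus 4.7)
The plan is to follow the Rogers--Gritzmann paradigm: inscribe in $F_n$ a body $M$ whose lattice covering density is well understood, and then pass to $F_n$ via inequality~\eqref{eq01}. The body $M$ will be a $k$-fold (generalized) cylinder, to which the cylinder-density lemma announced for Section~\ref{sec2} applies, giving a bound of the form $\theta_L(M) \le c n (\log_e n)^{1+\log_2 e}$ once the free parameters are balanced correctly.

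The construction of $M$ is where anti-blocking replaces symmetry. Since $F_n \subseteq \mathbb{R}_{\ge 0}^n$ and is closed under coordinatewise reduction, for any $y\in F_n$ the axis-parallel box $\prod_i [0,y_i]$ lies in $F_n$. More generally, given a partition $[n]=I_1\cup\cdots\cup I_m$ and a compatible choice of pivot point $y$, the "fiber slab" obtained by keeping the fiber $\{x_{I_j}:(x,y)\in F_n\}$ on one block and the full coordinate box $\prod_{i\in I_j^c}[0,y_i]$ on the remaining blocks is automatically contained in $F_n$ by downward closure. This is the quantitative substitute for Gritzmann's folding along hyperplanes of symmetry: rather than using a reflection to produce an unconditional factor, we use monotone rearrangement along coordinate axes to produce a box factor, which contributes covering density $1$ by the cylinder lemma.

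Concretely, I would choose a geometric sequence of block sizes $|I_j|\sim n/2^j$ with $j=1,\ldots,m$ and $m=\Theta(\log\log n)$ (matching the $\log_2\log_e n+4$ threshold in Gritzmann's~\eqref{eq04}). With this choice I apply the approximation lemma of Section~\ref{sec2} to $F_n$ along each block in turn, using the downward-closure property to guarantee the inscribed product structure at each step. The output is a generalized cylinder $M\subseteq F_n$ whose non-box factor has dimension only $O(n/\log n)$ and satisfies $V(F_n)/V(M) \le (\log n)^{O(1)}$. Substituting into \eqref{eq01} together with the cylinder lemma yields the claimed $\theta_L(F_n) \le cn (\log_e n)^{1+\log_2 e}$.

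The main obstacle is the second step---making the inscribed-cylinder construction quantitative. In the symmetric setting the folding is essentially free, whereas for an anti-blocking body the pivot points $y$ in each block must be chosen to simultaneously keep the remaining fiber large and the box factor large; this is a trade-off governed by the concavity of $y\mapsto V(F_n(y))^{1/|I_j|}$ on the projection (Brunn--Minkowski) combined with the anti-blocking shape of that projection. I expect this balancing to be absorbed into the Section~\ref{sec2} approximation lemma, so that once the lemmas are in place the theorem reduces to inserting the parameters and applying~\eqref{eq01}.
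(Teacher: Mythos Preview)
Your high-level strategy---inscribe a $k$-fold cylinder and combine Lemma~\ref{lem1} with~\eqref{eq01}---matches the paper, and your instinct that downward closure of an anti-blocking body substitutes for Gritzmann's reflection symmetries is exactly right. But the block decomposition you propose is misaligned with what Lemma~\ref{lem1} actually says. A $k$-fold cylinder is a product of an $(n-k)$-dimensional convex body and $k$ \emph{one-dimensional} segments; the bound in Lemma~\ref{lem1} is of order $2^{k}\cdot(n-k)$, so one wants $k$ barely above $\log_{2}\log_{e}n$, giving $2^{k}\approx\log_{e}n$ while the base keeps dimension essentially~$n$. Your scheme instead aims to shrink the non-box factor to dimension $O(n/\log n)$, which forces $k\approx n-n/\log n$ segment factors and makes the $2^{k}$ in Lemma~\ref{lem1} explode. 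There is no separate cylinder lemma delivering $\theta_{L}(M)\le cn(\log n)^{1+\log_{2}e}$ for a cylinder with small base; that bound is the \emph{conclusion} of the whole argument, obtained by multiplying $2^{k}\approx\log n$, the linear factor $(n-k)\approx n$, and the accumulated volume loss $e^{k}\approx(\log n)^{\log_{2}e}$.

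The paper's construction is much simpler than your block scheme: apply Lemma~\ref{lem2} once in the single direction $\mathbf{e}_{n}$ to get a generalized cylinder $Z_{n}\subseteq F_{n}$ with $V(F_{n})\le(1+1/(n-1))^{n-1}V(Z_{n})$. Because $F_{n}$ is anti-blocking, the orthogonal projection $P_{n-1}$ of $Z_{n}$ onto $\mathbf{e}_{n}^{\perp}$ already lies in $F_{n}$, so $Z_{n}$ may be replaced by the ordinary cylinder $P_{n-1}\times[0,a_{1}]\subseteq F_{n}$ of the same volume. The anti-blocking hull of $P_{n-1}$ is again an $(n-1)$-dimensional anti-blocking body, and one simply iterates---peeling off one coordinate direction per step, $k$ times in total---to reach a $k$-fold cylinder $C_{k}\subseteq F_{n}$ with $V(F_{n})\le\prod_{i=n-k}^{n-1}(1+1/i)^{i}\,V(C_{k})<e^{k}V(C_{k})$. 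No pivot points, no Brunn--Minkowski balancing: Lemma~\ref{lem2} already optimizes the single-direction trade-off at each step.
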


\begin{thm}\label{thm2}
 Let $L_{n}$ be an $n$-dimensional locally anti-blocking body, then there is a constant $c$ such that
 $$\theta_{L}(L_{n})\leq cn(\log_{e}n)^{2+\log_{2}e}.$$
\end{thm}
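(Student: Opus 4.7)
The plan is to follow the framework used for Theorem~\ref{thm1}: inscribe a $k$-fold cylinder $Z\subseteq L_n$ via the approximation lemma of Section~\ref{sec2}, bound $\theta_L(Z)$ via the cylinder covering estimate of the same section, and combine these through inequality~(\ref{eq01}) to bound $\theta_L(L_n)$. The single extra factor of $\log_e n$ over Theorem~\ref{thm1} should come from the cost of reducing a locally anti-blocking body to one of its anti-blocking orthant pieces.

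The first key step is the structural observation that if $L_n$ is locally anti-blocking, then any coordinate-hyperplane slice $L_n\cap\{x_j=c\}$ remains locally anti-blocking in $\mathbb{R}^{n-1}$, and moreover the central slice at $c=0$ has maximum $(n-1)$-dimensional volume among all cross-sections perpendicular to $e_j$; this holds because each orthant piece of $L_n$ is anti-blocking, hence has cross-section volume nonincreasing as $|x_j|$ grows. Iterating the approximation lemma then yields an inscribed cylinder $Z=Z_1\times\cdots\times Z_k\subseteq L_n$ whose factors $Z_i$ are themselves lower-dimensional locally anti-blocking bodies, with $V(L_n)/V(Z)$ polynomial in $n$ and $k$.

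Next I would bound $\theta_L(Z)$ by combining the cylinder covering estimate with Theorem~\ref{thm1}. For each factor $Z_i$, applying inequality~(\ref{eq01}) to its maximum-volume orthant piece $\widetilde Z_i\subseteq Z_i$ (a reflected anti-blocking body with $V(\widetilde Z_i)\geq V(Z_i)/2^{\dim Z_i}$) and invoking Theorem~\ref{thm1} gives a bound on $\theta_L(Z_i)$. The cylinder covering estimate then aggregates these into a bound on $\theta_L(Z)$, and a final application of inequality~(\ref{eq01}) yields an upper estimate for $\theta_L(L_n)$.

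The main obstacle will be the parameter optimization. The orthant-reduction factor $2^{\dim Z_i}$ per cylinder component cannot be allowed to compound into anything worse than a $\log_e n$ term; this forces the dimensions $\dim Z_i$ to be chosen small (roughly logarithmic) and $k$ correspondingly large. At the same time, the approximation and cylinder covering lemmas must remain tight under such a choice. Carefully balancing these competing losses should yield the stated bound $cn(\log_e n)^{2+\log_2 e}$, precisely one logarithmic factor more than Theorem~\ref{thm1}.
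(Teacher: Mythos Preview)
There is a genuine arithmetic gap in your plan. You propose to pass from each locally anti-blocking factor $Z_i$ to its largest orthant piece $\widetilde Z_i$ at a volume cost of $2^{\dim Z_i}$, and you hope that choosing the $\dim Z_i$ small will keep this under control. But these costs multiply: $\prod_i 2^{\dim Z_i}=2^{\sum_i\dim Z_i}=2^n$, since the dimensions of the factors must sum to $n$ regardless of how you split them. So the orthant-reduction route loses an exponential factor $2^n$, not merely a $\log_e n$, and no parameter choice can repair this. (Even setting the $2^n$ aside, applying Theorem~\ref{thm1} to $k\approx n/\log_e n$ factors and multiplying the resulting densities already produces something superpolynomial in~$n$.)

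There is also a structural mismatch: Lemma~\ref{lem1} is stated for a $k$-fold cylinder $T\times S_1\times\cdots\times S_k$ with \emph{line-segment} factors, not for a product of $k$ convex bodies, and iterating Lemma~\ref{lem2} peels off one segment direction at a time rather than producing a block decomposition $Z_1\times\cdots\times Z_k$ into locally anti-blocking pieces. The paper instead uses a much cheaper reduction: at each of the $k$ iterations it cuts the current locally anti-blocking body $L_{n-j+1}$ into the halves $L_{n-j+1}\cap\{x_{n-j+1}\ge 0\}$ and $L_{n-j+1}\cap\{x_{n-j+1}\le 0\}$, applies Lemma~\ref{lem2} to each half (where being anti-blocking in the single direction $\mathbf{e}_{n-j+1}$ suffices to turn the generalized cylinder into an ordinary one), and keeps the larger. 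This costs only a factor of $2$ per step, so after $k\approx\log_2\log_e n$ steps the total overhead is $2^k\approx\log_e n$, which is exactly the extra logarithm in the theorem.
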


\begin{thm}\label{thm3}
 Let $P$ be an $n$-dimensional polytope in $\mathbb{R}^{n}$ with at most $n+2$ vertices,
 then there is a constant $c$ such that
 $$\theta_{L}(P)\leq cn(\log_{e}n)^{1+\log_{2}e}.$$
\end{thm}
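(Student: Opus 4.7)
The plan is to exploit the Radon partition of the vertex set of $P$ and reduce the problem to Gritzmann's inequality~\eqref{eq04} for a highly symmetric inscribed body. Since $P$ has $n+2$ vertices in $\mathbb{R}^n$, Radon's theorem yields a (generically unique) partition $V(P)=A\sqcup B$ with $|A|=a+1$, $|B|=b+1$, $a+b=n$, such that the simplices $\sigma_A=\conv(A)$ and $\sigma_B=\conv(B)$ meet in their relative interiors at a Radon point $o$. By the affine invariance of $\theta_L$, I may translate $o$ to the origin and choose a linear map sending the affine hulls of $A$ and $B$ to complementary coordinate subspaces $L_A,L_B\subseteq\mathbb{R}^n$. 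A direct check then gives the structural description
$$
P \;=\; \sigma_A\oplus_1\sigma_B \;=\; \bigl\{(u,v)\in L_A\oplus L_B : \|u\|_{\sigma_A}+\|v\|_{\sigma_B}\le 1\bigr\},
$$
where $\|\cdot\|_K$ denotes the Minkowski gauge of a body $K$ containing the origin in its interior.

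The second step is to inscribe in $P$ a body $M=M_A\oplus_1 M_B$ with $M_A\subseteq\sigma_A$ and $M_B\subseteq\sigma_B$ enjoying many pairwise orthogonal hyperplanes of symmetry. A regular $d$-simplex possesses $\lfloor(d+1)/2\rfloor$ such hyperplanes (the perpendicular bisectors of disjoint pairs of vertices), so if $M_A$ and $M_B$ could be chosen as centred regular simplices inside $\sigma_A$ and $\sigma_B$, the $\ell_1$-sum $M$ would inherit at least $\lfloor(a+1)/2\rfloor+\lfloor(b+1)/2\rfloor\ge\lfloor n/2\rfloor$ pairwise orthogonal hyperplanes of symmetry, far exceeding the threshold $\log_2\log_e n+4$ required by Gritzmann's inequality~\eqref{eq04}. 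That inequality together with the monotonicity estimate~\eqref{eq01} would then give $\theta_L(P)\le (V(P)/V(M))\cdot cn(\log n)^{1+\log_2 e}$, yielding the claim provided $V(P)/V(M)$ is at most polylogarithmic in $n$.

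The main obstacle is controlling the volume ratio $V(P)/V(M)$ under the symmetry constraint. Since the Radon point $o$ is typically \emph{not} the centroid of either simplex, no linear map of $L_A$ fixing $o$ can turn $\sigma_A$ into a regular simplex centred at $o$, and naively chosen centred symmetric bodies (such as $\sigma_A\cap(-\sigma_A)$) can suffer exponential volume loss in each factor. To circumvent this I would invoke the cylinder construction alluded to in the introduction, namely the approximation lemma and the $k$-fold cylinder covering estimate announced for Section~\ref{sec2}: inside each simplex factor one inscribes a $k$-fold cylinder of near-maximal volume whose $\theta_L$ is directly bounded by the cylinder covering lemma. Reassembling the two inscribed cylinders via the $\ell_1$-sum structure and applying~\eqref{eq01} should then deliver the target bound $cn(\log n)^{1+\log_2 e}$, matching that of Theorem~\ref{thm1} for anti-blocking bodies.
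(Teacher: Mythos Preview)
Your Radon decomposition $P=\sigma_A\oplus_1\sigma_B$ is appealing, but the ``reassembly'' step is where the argument breaks. You correctly diagnose that inscribing centred regular simplices can lose an exponential volume factor when the Radon point $o$ is far from the centroids of $\sigma_A,\sigma_B$, and you then fall back on the cylinder lemmas of Section~\ref{sec2}. The trouble is that neither of the two available $\theta_L$--bounds applies to the object you end up with. The $k$-fold cylinders produced inside each simplex (via the anti-blocking construction of Theorem~\ref{thm1}) have the form $T\times[0,a_1]\times\cdots\times[0,a_k]$; their reflection hyperplanes pass through the \emph{midpoints} $a_j/2$, not through $o$. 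Since the gauge defining the $\ell_1$-sum is centred at $o$, the body $C_A\oplus_1 C_B$ does \emph{not} inherit those reflections, so Gritzmann's inequality~\eqref{eq04} is unavailable. And Lemma~\ref{lem1} is unavailable too: the $\ell_1$-sum of two $k$-fold cylinders is not a $k$-fold cylinder --- intersecting $C_A\oplus_1 C_B$ with a line parallel to one of the segment directions in $C_A$ gives a segment whose length depends on $\|v\|_{C_B}$. So after the reassembly you have no inscribed body $M$ with a usable bound on $\theta_L(M)$, and~\eqref{eq01} yields nothing. (There is also a minor issue: when $P$ is a pyramid the unique affine dependence among the vertices has a zero coefficient at the apex, so the Radon point lies in the base hyperplane and the $\ell_1$-sum description fails outright.)

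The paper sidesteps the Radon picture altogether. It shows (Lemma~\ref{lem6}) that $P$ is either a pyramid or a generalized bipyramid; in either case there is a hyperplane whose section with $P$ is an $(n-1)$-dimensional polytope $P'$ with at most $(n-1)+2$ vertices, and $P$ is swept out by scaling $P'$ along a single segment. This gives (Lemma~\ref{lem5}) an honest cylinder $C\subseteq P$ with $V(P)=\bigl(1+\tfrac{1}{n-1}\bigr)^{n-1}V(C)$ whose base is again a polytope with at most $(\dim+2)$ vertices. The construction therefore recurses cleanly and after $k$ steps produces a genuine $k$-fold cylinder inside $P$, to which Lemma~\ref{lem1} applies directly --- exactly as in the proof of Theorem~\ref{thm1}.
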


\begin{rem}
In view of \eqref{eq04} and Theorems~\ref{thm1}-\ref{thm3}, we may conjecture that for all $K\in\mathcal{K}^{n}$, we have $\theta_{L}(K)\leq cn(\log_{e}n)^{1+\log_{2}e}$, for some absolute constant $c > 0$.
\end{rem}

\section{Some useful lemmas}\label{sec2}

A $k$-fold cylinder $C_{k}$ is defined as the Cartesian product $T\times S_{1}\times\ldots\times S_{k}$ of an $(n-k)$-dimensional convex body $T$ and $k$ line segments $S_{1},\ldots,S_{k}$. In particular, when $k=1$, one gets an ordinary cylinder.
A.~M.~Macbeath~\cite{Mac51} proved that for every $K\in \mathcal{K}^{n}$, there exists a cylinder $C_{1}$ inscribed in $K$ with $$V(K)\leq n\left(1+\frac{1}{n-1}\right)^{n-1}V(C_{1}).$$ Based on that, C.~A.~Rogers~\cite{Rog59} further gave an approximation theorem for convex bodies with inscribed $k$-fold cylinders and a rather efficient covering of $C_{k}$.

\begin{lem}[Rogers \cite{Rog59}]\label{lem1}
 Let $n$ be sufficiently large, let $k$ be an integer satisfying $n>k>\log_{2}\log_{e}n+4$, and let $C_{k}$ be a $k$-fold cylinder. Then
 $$\theta_{L}(C_{k})\leq 2^{k}\left[\frac{1}{4}(n-k)\log_{e}\frac{27}{16}-3\log_{e}(n-k)\right]\left(1+\frac{1}{n}\right)^{n}.$$
\end{lem}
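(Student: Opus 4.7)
The plan is to construct a lattice covering of $C_k = T \times S_1 \times \cdots \times S_k$ by taking the direct sum of a lattice covering of the $(n-k)$-dimensional base $T$ with the orthogonal lattice generated by the segment lengths $|S_i|$. Since each segment tiles the real line by integer multiples of its length, the product lattice $\Lambda' \oplus \Lambda''$ covers $\mathbb{R}^n$ with translates of $C_k$, and the density of this product covering equals $\theta_L(T)$. Thus the task reduces to bounding the lattice covering density of an arbitrary $(n-k)$-dimensional convex body $T$ in the form stated.

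The heart of the argument is therefore to bound $\theta_L(T)$ effectively. For this, I would invoke Rogers' iterative Macbeath inscription: the Macbeath theorem quoted just before the lemma supplies a one-fold cylinder $C^{(1)} = T^{(1)} \times S^{(1)} \subseteq T$ with $V(T) \leq (n-k)(1+\tfrac{1}{n-k-1})^{n-k-1} V(C^{(1)})$, and the monotonicity inequality \eqref{eq01} gives $\theta_L(T) \leq (V(T)/V(C^{(1)})) \, \theta_L(C^{(1)})$. Iterating on the $(n-k-1)$-dimensional base $T^{(1)}$ of $C^{(1)}$ and tracking the accumulating losses, one arrives at a chain of inscribed cylinders whose Macbeath factors telescope to at most $(1+1/n)^n$. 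After a controlled number of iterations, the remaining base has dimension low enough that its lattice covering density can be bounded via a product of efficient planar coverings, each of density at most $2\pi/\sqrt{27}$; exponentiating over the $\lfloor (n-k)/2\rfloor$ planar slices produces an $(27/16)^{(n-k)/4}$ growth term, which appears in the stated bound in the logarithmic form $\tfrac{1}{4}(n-k)\log_e(27/16)$.

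The factor $2^k$ in the bound reflects the doubling that arises when the covering of the base $T$ is merged with the tilings in the $k$ existing segment directions of $C_k$: the cylinders produced by the Macbeath inscription need not be aligned with the given $S_i$, so each of the $k$ segment directions contributes an extra factor of $2$ of slack when the two sub-lattices are combined. The corrective $-3\log_e(n-k)$ inside the bracket is a polynomial sharpening that comes from using the tightest planar covering estimates together with a finer analysis of the low-dimensional residual piece.

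The main obstacle is the simultaneous bookkeeping of these ingredients: the iterative Macbeath losses, the planar covering optimum, and the combinatorial $2^k$ alignment factor must all be tracked through the same recursion, with careful balance between the number of inscription steps and the residual base dimension. The hypothesis $k > \log_2\log_e n + 4$ is exactly what guarantees that the iterative construction terminates with a base of sufficiently small dimension for the planar covering bound to dominate, so that the final product covering achieves the stated density.
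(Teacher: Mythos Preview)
The paper does not prove this lemma; it is quoted from Rogers~\cite{Rog59} without argument. So there is no in-paper proof to compare against, and I can only assess your sketch on its own terms.

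Your outline has a genuine gap. The product-lattice step is fine and gives $\theta_L(C_k)\le\theta_L(T)$ exactly, with no loss. But then you propose to bound $\theta_L(T)$ for an \emph{arbitrary} $(n-k)$-dimensional convex body by the right-hand side of the lemma. That cannot succeed: the right-hand side is of order $2^k\cdot(n-k)$, i.e.\ $O((n-k)\log(n-k))$ once $k\approx\log_2\log_e n$, and no bound of this quality is known for $\theta_L$ of a general convex body (Rogers' own general bound was $m^{\log_2\log_e m+c}$; even today the best is $cm^2$). Iterating Macbeath inside $T$ does not help --- it only manufactures another cylinder and you are back to the same problem one dimension down, which is circular. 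The appeal to ``planar slices'' with density $2\pi/\sqrt{27}$ is unjustified: the base $T$ is a general convex body, not a product of $2$-dimensional pieces, so there is no way to factor its covering into planar coverings. Finally, your explanation of the $2^k$ as ``misalignment slack'' between the Macbeath cylinders and the $S_i$ is incoherent: the Macbeath inscription takes place entirely inside the $(n-k)$-dimensional base and never interacts with the segment directions, and the product lattice already achieves $\theta_L(T)$ with no alignment issue whatsoever.

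Rogers' actual mechanism uses the $k$ segment directions in a much more essential way than the trivial product. Roughly, he halves the step in each of the $k$ segment directions and attaches base-shifts $w_1,\ldots,w_k\in\mathbb{R}^{n-k}$ to those generators; then for any point of $\mathbb{R}^n$ there are $2^k$ admissible lattice translates in the segment directions, so it suffices that the $2^k$ sets $T+\sum_i\sigma_i w_i+\Lambda_0$ ($\sigma\in\{0,1\}^k$) jointly cover $\mathbb{R}^{n-k}$, rather than $T+\Lambda_0$ alone. The determinant picks up a factor $2^{-k}$, whence the density is $2^k\,V(T)/\det(\Lambda_0)$, and the bracket term is Rogers' estimate (via an averaging argument of the 1957 type) for how small $V(T)/\det(\Lambda_0)$ can be made when $2^k$ structured translates are available. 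The hypothesis $k>\log_2\log_e n+4$ is what guarantees that $2^k$ translates suffice for that averaging argument to go through; it is not, as you suggest, a termination condition for a Macbeath recursion.
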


P.~Gritzmann~\cite{Gri85} introduced the concept of generalized cylinders to improve Macbeath's result. A subset $Z$ of $\mathbb{R}^{n}$ is called a generalized cylinder if it satisfies the following two conditions: First, there exists a hyperplane~$H$ with normal vector~$\mathbf{u}$ such that the orthogonal projection of~$Z$ onto~$H$ is an $(n-1)$-dimensional convex body. And second, all non-empty intersections of~$Z$ with lines perpendicular to~$H$ are segments of the same non-zero length. The direction of the generalized cylinder is $\mathbf{u}$. It can be seen that such generalized cylinders need not be convex. With this notion, Gritzmann's improved approximation theorem for convex bodies with inscribed generalized cylinders reads as follows:

\begin{lem}[Gritzmann \cite{Gri85}]\label{lem2}
 Let $K\in \mathcal{K}^{n}$ and let $\mathbf{u}\in \mathbb{R}^{n}$. Then there is a generalized cylinder $Z$ of direction $\mathbf{u}$ inscribed in $K$ such that
 $$V(K)\leq \left(1+\frac{1}{n-1}\right)^{n-1}V(Z).$$
\end{lem}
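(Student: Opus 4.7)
The plan is to reduce the statement to a one-variable integral inequality governed by Brunn--Minkowski concavity. After rotating coordinates I may assume $\mathbf{u}=\mathbf{e}_n$ and set $H=\mathbf{u}^{\perp}$. Let $K'\subseteq H$ be the orthogonal projection of $K$, and for every $\mathbf{x}'\in K'$ let $h(\mathbf{x}')$ denote the length of the chord $K\cap(\mathbf{x}'+\mathbb{R}\mathbf{u})$. Convexity of $K$ immediately implies that $h$ is concave on $K'$, so its super-level sets $A_{t}:=\{\mathbf{x}'\in K':h(\mathbf{x}')\geq t\}$ are convex for all $t\geq 0$. Above every point of $A_{t}$ one can place a segment of length $t$ contained in $K$ in the direction $\mathbf{u}$, and the union of these segments is a generalized cylinder $Z_{t}\subseteq K$ of direction $\mathbf{u}$ with base $A_{t}$ and height $t$; in particular $V(Z_{t})=t\,V_{n-1}(A_{t})$.

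Writing $\phi(t):=V_{n-1}(A_{t})$ and $M:=\max h$, Fubini yields $V(K)=\int_{0}^{M}\phi(t)\,dt$, so it suffices to show
\[
\int_{0}^{M}\phi(t)\,dt \ \leq\ \Bigl(1+\tfrac{1}{n-1}\Bigr)^{\!n-1}\max_{t\in[0,M]} t\,\phi(t).
\]
The key structural input is that $\phi^{1/(n-1)}$ is concave on $[0,M]$; this follows from the Brunn--Minkowski inequality applied to the horizontal slices of the $n$-dimensional convex body $\widetilde{K}:=\{(\mathbf{x}',s):\mathbf{x}'\in K',\ 0\leq s\leq h(\mathbf{x}')\}$, whose slice at height $t$ is precisely $A_{t}$.

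The main analytic step, and the place where I expect the real work to lie, is a tangent-line argument on $g:=\phi^{1/(n-1)}$. Let $t_{0}\in(0,M]$ be a point at which $t\mapsto t\phi(t)$ attains its maximum. From $(t\phi(t))'(t_{0})=0$ one computes $g'(t_{0})=-g(t_{0})/((n-1)t_{0})$, so the tangent line to $g$ at $t_{0}$,
\[
T(s)\ =\ g(t_{0})\bigl[1-(s-t_{0})/((n-1)t_{0})\bigr],
\]
vanishes at $s=nt_{0}$. Concavity of $g$ forces $g\leq T$ on $[0,M]$, and in particular $M\leq nt_{0}$ since $g(M)=0$. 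Because $T\geq 0$ on $[0,nt_{0}]$, the substitution $u=1-(s-t_{0})/((n-1)t_{0})$ then gives
\[
\int_{0}^{M}\phi(s)\,ds \ \leq\ \int_{0}^{nt_{0}} T(s)^{n-1}\,ds \ =\ (n-1)t_{0}\,\phi(t_{0})\int_{0}^{n/(n-1)} u^{n-1}\,du \ =\ t_{0}\phi(t_{0})\Bigl(\tfrac{n}{n-1}\Bigr)^{\!n-1},
\]
which is precisely the required bound. Non-differentiability of $g$ at $t_{0}$ is handled by replacing $g'(t_{0})$ with any supergradient of the concave function $g$ at $t_{0}$, and the extremal case (equality) corresponds to $h$ affine on $K'$, i.e.~$K$ a cone over $K'$ in direction $\mathbf{u}$, recovering the tight example already present in Macbeath's inequality.
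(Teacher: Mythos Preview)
The paper does not prove Lemma~\ref{lem2}; it simply cites Gritzmann~\cite{Gri85}. So there is no in-paper argument to compare against. Your proof is, in fact, essentially Gritzmann's original one: project $K$ along $\mathbf{u}$, use concavity of the chord-length function $h$, pass to the super-level sets $A_t$, invoke Brunn--Minkowski to get concavity of $\phi^{1/(n-1)}$, and then run the tangent-line calculus to bound $\int_0^M\phi$ by $(n/(n-1))^{n-1}\max_t t\phi(t)$. The computation is correct.

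One small point to tighten: you assert $g(M)=0$ to conclude $M\le nt_0$, but $A_M=\{h=\max h\}$ can have positive $(n-1)$-volume (e.g.\ when $K$ itself is a cylinder), so $g(M)$ need not vanish. The fix is immediate and already implicit in your supergradient remark: at a maximizer $t_0$ of $t\phi(t)$ (including the endpoint $t_0=M$) the one-sided optimality condition gives $g'_-(t_0)\ge -g(t_0)/((n-1)t_0)$, so $p=-g(t_0)/((n-1)t_0)$ is always a valid supergradient of the concave $g$ at $t_0$. The supporting line $T$ with that slope then satisfies $T\ge g\ge 0$ on $[0,M]$ and $T(nt_0)=0$, whence $M\le nt_0$ follows from $T(M)\ge g(M)\ge 0$ without needing $g(M)=0$. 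With this adjustment the argument is complete.
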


Note that the result allows to prescribe the direction $\mathbf{u}$ of the large generalized cylinder inside~$K$.

\section{Anti-blocking and locally anti-blocking bodies}\label{sec3}

To begin with, we give the definition of anti-blocking and locally anti-blocking bodies.
A convex body $F_{n}\subseteq \mathbb{R}^{n}_{\geq 0}$ is called anti-blocking if for any $\mathbf{x}=(x_{1},\ldots,x_{n})\in F_{n}$, the whole axes-parallel box $[0,x_{1}]\times[0,x_{2}]\times\ldots\times[0,x_{n}]$ is contained in $F_{n}$. Anti-blocking bodies were firstly named and studied by D.~R.~Fulkerson~\cite{Ful71,Ful72} in the context of combinatorial optimization in 1970s, also called convex corners or down-closed bodies~\cite{Bel00}.

For $\sigma\in \{-1,1\}^{n}$ and any set $S \subseteq \mathbb{R}^{n}$, let $\sigma(S)=\{(\sigma_{1}x_{1},\ldots,\sigma_{n} x_{n}) : \mathbf{x}=(x_{1},\ldots,x_{n})\in S\}$. We call an $n$-dimensional convex body $L_{n}$ locally anti-blocking body if $\sigma(L_n)\cap\mathbb{R}^{n}_{\geq0}$ is an anti-blocking body for every $\sigma\in \{-1,1\}^{n}$. In addition, if $L_{n}$ is a polytope, then $L_{n}$ is said to be a locally anti-blocking polytope. Locally anti-blocking polytopes were introduced in~\cite{Koh20} and studied in depth in~\cite{Art23}. Especially, an $n$-dimensional convex body $U_{n}$ is called $1$-unconditional or simply unconditional if for any $\mathbf{x}\in U_{n}$, we have $\sigma(\mathbf{x}) = (\sigma_{1}x_{1},\ldots,\sigma_{n} x_{n}) \in U_{n}$, for every $\sigma\in \{-1,1\}^{n}$. We can see that $U_{n}\cap\mathbb{R}^{n}_{\geq0}$ is an anti-blocking body. Clearly, all unconditional bodies and anti-blocking bodies are locally anti-blocking bodies. 

Locally anti-blocking bodies play an essential role in studying some famous conjectures in Convex Geometry, such as Kalai's $3^{d}$-conjecture, Godbersen's conjecture, Mahler's conjecture and Hadwiger-Gohberg-Markus' illumination conjecture, see~\cite{San24, Sad25, Art23, Tik17}.

\begin{proof}[\textbf{Proof of Theorem~\ref{thm1}}]
Let $\mathbf{e}_{i}=(0,\ldots,0,1,0,\ldots,0)$ be the $i$-th coordinate unit vector in $ \mathbb{R}^{n}$. By Lemma~\ref{lem2}, for any given $n$-dimensional anti-blocking body $F_{n}$, there is a generalized cylinder $Z_{n}$ of direction $\mathbf{e}_{n}$ inscribed in $F_{n}$ such that $V(F_{n})\leq \left(1+1/(n-1)\right)^{n-1}V(Z_{n}).$ In fact, $Z_{n}$ can be assumed to be an ordinary cylinder: First, since $Z_{n}\subseteq F_{n}$ and $F_n$ is anti-blocking, the orthogonal projection~$P_{n-1}$ of~$Z_{n}$ onto $\mathbf{e}_{n}^{\perp} = \left\{ \mathbf{x} \in \mathbb{R}^n : x_n = 0 \right\}$ is an $(n-1)$-dimensional convex body in $F_{n}$. And secondly, suppose that all non-empty intersections of $Z_{n}$ with lines parallel to $\mathbf{e}_{n}$ are segments of the same non-zero length~$a_{1}$. Then, the ordinary cylinder $P_{n-1}\times [0,a_{1}]$ is inscribed in $F_{n}$ and has the same volume as $Z_{n}$. For simplicity, we still use the notation $Z_{n}=P_{n-1}\times [0,a_{1}]=C_{1}$ to denote this ordinary cylinder, and we get $$V(F_{n})\leq \left(1+\frac{1}{n-1}\right)^{n-1}V(C_{1}).$$
Moreover, the set
$$F_{n-1} = P_{n-1} \ \cup \bigcup_{\mathbf{y}=(y_{1},\ldots,y_{n})\in P_{n-1}} [0,y_{1}]\times[0,y_{2}]\times\ldots\times[0,y_{n-1}]$$
is an $(n-1)$-dimensional anti-blocking body in $\mathbf{e}_n^\perp$ satisfying $F_{n-1} \times [0,a_1] \subseteq F_n$.

This process can be repeated inductively, and for $k=1,\ldots,n$, we get an ordinary cylinder $Z_{n-k+1}$ of direction $\mathbf{e}_{n-k+1}$ and height $a_{k}$ in $F_{n-k+1}$ satisfying the volume bound in Lemma~\ref{lem2}.
Further, let $P_{n-k}$ be the orthogonal projection of $Z_{n-k+1}$ along $\mathbf{e}_{n-k+1}$ onto the $(n-k)$-dimensional subspace $\mathbf{e}_{n-k+1}^{\perp}\cap\ldots\cap\mathbf{e}_{n}^{\perp}$.
Then, we have
$$Z_{n-k+1}=P_{n-k}\times[0,a_{k}].$$

The set
\begin{equation}\label{eq06}
F_{n-k} = P_{n-k} \ \cup \bigcup_{\mathbf{y}=(y_{1},\ldots,y_{n})\in P_{n-k}} [0,y_{1}]\times[0,y_{2}]\times\ldots\times[0,y_{n-k}].
\end{equation}
is an $(n-k)$-dimensional anti-blocking body satisfying
\begin{equation}\label{eq05}
F_{n-k}\times[0,a_{1}]\times\ldots\times[0,a_{k}]\subseteq F_{n}.
\end{equation}
Thus, we obtain a $k$-fold cylinder $$C_{k}=P_{n-k}\times[0,a_{1}]\times\ldots\times[0,a_{k}],$$ which by~\eqref{eq06} and~\eqref{eq05} satisfies $C_{k}\subseteq F_{n}$.
By induction, we get the volume estimate $$V(F_{n}) \leq \prod_{i=n-k}^{n-1}\left (1+\frac{1}{i}\right)^{i}V(C_{k}),$$
for all $k \geq 1$.
In view of~\eqref{eq01}, the lattice covering densities $\theta_{L}(F_{n})$ and $\theta_{L}(C_{k})$ satisfy
$$\theta_{L}(F_{n})\leq \frac{V(F_{n})}{V(C_{k})}\theta_{L}(C_{k}).$$
By Lemma~\ref{lem1}, let $k$ be the least integer greater than $\log_{2}\log_{e}n+4$, then for sufficiently large $n$, we have
\begin{align*}\label{eq0.3}
\theta_{L}(F_{n})
&\leq 2^{k}\left[\frac{1}{4}(n-k)\log_{e}\frac{27}{16}-3\log_{e}(n-k)\right]\prod_{i=n-k}^{n}\left (1+\frac{1}{i}\right)^{i}\\
&<2^{k}e^{k+1}n\\
&\leq cn(\log_{e}n)^{1+\log_{2}e}.
\end{align*}
The constant $c$ is independent of $n$ and $F_{n}$.
\end{proof}

\begin{proof}[\textbf{Proof of Theorem~\ref{thm2}}]
The proof for locally anti-blocking bodies is quite similar to the one for anti-blocking bodies in Theorem~\ref{thm1} above.
We repeat the arguments with the necessary adjustments that explain the additional $\log_e(n)$ factor in the resulting upper bound on $\theta_L(L_n)$.

Let $H^{+}_{n}=\{\mathbf{x} \in \mathbb{R}^n : x_{n}\geq 0\}$ and $H^{-}_{n}=\{\mathbf{x} \in \mathbb{R}^n : x_{n}\leq 0\}$.
By Lemma~\ref{lem2}, for any given $n$-dimensional locally anti-blocking body $L_{n}$, there is a generalized cylinder~$Z_{n}^{\pm}$ 
of direction $\mathbf{e}_{n}$ inscribed in $L_{n}\cap H^{\pm}_{n}$ 
such that $V(L_{n}\cap H^{\pm}_{n})\leq \left(1+1/(n-1)\right)^{n-1}V(Z_{n}^{\pm})$. Let $P_{n-1}^{\pm}$ be the orthogonal projection of $Z_{n}^{\pm}$ onto $\mathbf{e}_{n}^{\perp}$ along $\mathbf{e}_{n}$ respectively, then $P_{n-1}^{\pm}$ is an $(n-1)$-dimensional convex body in $L_{n}\cap H^{\pm}_{n}$. 
Just as in the proof of Theorem~\ref{thm1}, $Z_{n}^{\pm}$ can be assumed to be ordinary cylinders. That is to say, there exists $Z_{n}^{\pm}=P_{n-1}^{\pm}\times [0,a_{1}^{\pm}]$ of direction $\mathbf{e}_{n}$ inscribed in $L_{n}\cap H^{\pm}_{n}$ such that $$V(L_{n}\cap H^{\pm}_{n})\leq \left(1+\frac{1}{n-1}\right)^{n-1}V(Z_{n}^{\pm}).$$
Without loss of generality, assume that $V(Z_{n}^{-})\leq V(Z_{n}^{+})$ and let $C_{1}=Z_{n}^{+}=P_{n-1}^{+}\times [0,a_{1}^{+}]$, then we have $$V(L_{n})\leq 2\left(1+\frac{1}{n-1}\right)^{n-1}V(C_{1}).$$
Moreover, for a vector $\mathbf{y} = (y_1,\ldots,y_n)$ we write
\[
B(y_1,\ldots,y_{n-k}) = \left\{ \left(\sgn(y_1) \cdot x_1,\ldots,\sgn(y_{n-k}) \cdot x_{n-k} \right) : 0 \leq x_i \leq |y_i|, i=1,\ldots,n-k\right\}
\]
for the $(n-k)$-dimensional axis-parallel box with diagonally opposite vertices $\mathbf{0}$ and $(y_1,\ldots,y_{n-k})$, where $\sgn(r)$ denotes the sign of a real number~$r$.
With this notation, the set
\[
L_{n-1}^+ = P_{n-1}^+ \ \cup \bigcup_{\mathbf{y}=(y_{1},\ldots,y_{n})\in P_{n-1}^+} B(y_1,\ldots,y_{n-1})\times \{0\}
\]
is an $(n-1)$-dimensional locally anti-blocking body in $\mathbf{e}_n^\perp$ satisfying $L_{n-1}^+ \times [0,a_1^+] \subseteq L_n$.

Again, we repeat this process inductively.
For $k=1,\ldots,n$, we define the $(n-k+1)$-dimensional halfplanes
\begin{align*}
H_{n-k+1}^+ &= \left\{ \mathbf{x} \in \mathbb{R}^n : x_n = \ldots = x_{n-k+2} = 0, x_{n-k+1} \geq 0 \right\} \textrm{ and }\\
H_{n-k+1}^- &= \left\{ \mathbf{x} \in \mathbb{R}^n : x_n = \ldots = x_{n-k+2} = 0, x_{n-k+1} \leq 0 \right\}.
\end{align*}
We find ordinary cylinders $Z_{n-k+1}^{\pm} \subseteq L_{n-k+1} \cap H_{n-k+1}^{\pm}$ of direction $\mathbf{e}_{n-k+1}$ and height $a_{k}^{\pm}$ such that
\[
V(L_{n-k+1} \cap H^{\pm}_{n-k+1})\leq \left(1+\frac{1}{n-k}\right)^{n-k}V(Z_{n-k+1}^{\pm}).
\]
Without loss of generality we may assume $V(Z_{n-k+1}^-) \leq V(Z_{n-k+1}^+)$, so that
\[
V(L_{n-k+1}) \leq 2 \left(1+\frac{1}{n-k}\right)^{n-k}V(Z_{n-k+1}^+).
\]
Let $P_{n-k}^+$ be the orthogonal projection of $Z_{n-k+1}^+$ onto $\mathbf{e}_{n-k+1}^{\perp}\cap\ldots\cap\mathbf{e}_{n}^{\perp}$ along $\mathbf{e}_{n-k+1}$ so that

$$Z_{n-k+1}^+ = P_{n-k}^+ \times[0,a_{k}^+].$$

Further, let
\begin{equation}\label{eq07}
L_{n-k}^+ = P_{n-k}^+\ \cup \bigcup_{\mathbf{y}=(y_{1},\ldots,y_{n})\in P_{n-k}^+} B(y_1,\ldots,y_{n-k})\times \{0\}^k.
\end{equation}
Then $L_{n-k}^+$ is an $(n-k)$-dimensional locally anti-blocking body satisfying
\begin{equation}\label{eq08}
L_{n-k}^+ \times [0,a_{1}^{+}]\times \ldots \times[0,a_{k}^+] \subseteq L_{n}.
\end{equation}
We have thus constructed a $k$-fold cylinder
\[
C_{k} = P_{n-k}^+\times[0,a_{1}^{+}]\times\ldots\times[0,a_{k}^{+}].
\]
By (\ref{eq07}) and (\ref{eq08}), we have $C_{k}\subseteq L_{n}$, and inductively the volume bounds provide us with the estimate
\[
V(L_{n}) \leq 2^{k}\prod_{i=n-k}^{n-1}\left (1+\frac{1}{i}\right)^{i}V(C_{k}),
\]
for every $k \geq 1$.
In view of~\eqref{eq01}, the lattice covering densities $\theta_{L}(L_{n})$ and $\theta_{L}(C_{k})$ satisfy
$$\theta_{L}(L_{n})\leq \frac{V(L_{n})}{V(C_{k})}\theta_{L}(C_{k}).$$
By Lemma~\ref{lem1}, let $k$ be the least integer greater than $\log_{2}\log_{e}n+4$, then for sufficiently large $n$, we have
\begin{align*}\label{eq0.3}
\theta_{L}(L_{n})
&\leq 2^{2k}\left[\frac{1}{4}(n-k)\log_{e}\frac{27}{16}-3\log_{e}(n-k)\right]\prod_{i=n-k}^{n}\left (1+\frac{1}{i}\right)^{i}\\
&<2^{2k}e^{k+1}n\\
&\leq cn(\log_{e}n)^{2+\log_{2}e}.
\end{align*}
The constant $c$ is again independent of $n$ and $L_{n}$.
\end{proof}

\section{$n$-dimensional polytopes with $n+2$ vertices}\label{sec4}

Clearly, every simplex is affinely equivalent to an anti-blocking body of the same dimension, so that Theorem~\ref{thm1} applies.
However, not every $n$-dimensional polytope with $n+2$ vertices has this property (see Remark~\ref{rem1} below).
In this part, we show how Rogers' method can be used nevertheless on this class of polytopes and in particular, we give the proof of Theorem~\ref{thm3}.

Hence, let $P$ be an $n$-dimensional polytope with $n+2$ vertices.
Gr\"unbaum~\cite[Section~6.1]{Gru03} showed that combinatorially $P$ can be described as a $k$-fold pyramid over the free-sum of two lower-dimensional simplices, and that there are $\lfloor n^2/4 \rfloor$ different combinatorial types of such polytopes in $\mathbb{R}^n$.
For our purposes we only need a less detailed description of $P$, which rests on the following concept:
We say that a polytope $Q \subseteq \mathbb{R}^n$ is a generalized bipyramid, if there is an $(n-1)$-dimensional polytope~$B$ in~$\mathbb{R}^n$ (called the base of $Q$) and two points $\mathbf{v},\mathbf{w}$, one on each side of the hyperplane $\aff(B)$, and such that $Q = \conv\{[\mathbf{v},\mathbf{w}] \cup B\}$.
Note that for an ordinary bipyramid one would want the line segment $[\mathbf{v},\mathbf{w}]$ to intersect the relative interior of $B$, here we only need it to intersect the hyperplane $\aff(B)$.

\begin{lem}\label{lem6}
If $P$ is an $n$-dimensional polytope with $n+2$ vertices, then $P$ is a pyramid or a generalized bipyramid.
\end{lem}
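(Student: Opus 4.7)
The plan is to exploit the essentially unique affine dependence among the $n+2$ vertices of $P$, and to split into two cases according to whether this dependence has a zero coefficient. Label the vertices $v_1,\ldots,v_{n+2}$. Since $\dim P = n$, i.e., $\aff(V(P)) = \mathbb{R}^n$, the space of affine relations among the $v_i$ is exactly one-dimensional, so there are scalars $\lambda_1,\ldots,\lambda_{n+2}$, not all zero and unique up to a common factor, with $\sum_i \lambda_i v_i = 0$ and $\sum_i \lambda_i = 0$. Write $I_+, I_-, I_0$ for the indices with $\lambda_i$ positive, negative, or zero, respectively. First I will observe that $|I_+|, |I_-| \geq 2$: if, say, $|I_+|=1$, then isolating the single positive vertex in the relation exhibits it as a convex combination of the others, contradicting its extremality, and neither $I_+$ nor $I_-$ can be empty either, as that would force the whole dependence to be trivial.

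In the case $I_0 \neq \emptyset$, I would conclude that $P$ is a pyramid. Picking any $k \in I_0$ and deleting it from the relation leaves a nontrivial affine dependence among the remaining $n+1$ vertices, so those vertices lie in an affine hyperplane $H$. The vertex $v_k$ cannot lie in $H$, since otherwise $\dim P \leq n-1$. Each $v_i$ with $i \neq k$ remains a vertex of $B := \conv\{v_i : i \neq k\}$ (otherwise it would fail to be extremal in $P$), so $B$ is an $(n-1)$-dimensional polytope and $P = \conv(\{v_k\} \cup B)$ is a pyramid with apex $v_k$.

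In the case $I_0 = \emptyset$, I would conclude that $P$ is a generalized bipyramid. Using $|I_+| \geq 2$, I choose two indices $a, b \in I_+$. Because the affine-dependence space is one-dimensional and every nonzero representative has $\lambda_a, \lambda_b \neq 0$, the remaining $n$ vertices admit no nontrivial affine relation; hence they are affinely independent and form the vertex set of an $(n-1)$-simplex $B$ contained in the hyperplane $H = \aff(B)$. Picking any affine functional $\ell$ vanishing on $H$ and applying it to the relation gives $\sum_i \lambda_i \ell(v_i) = \lambda_a \ell(v_a) + \lambda_b \ell(v_b) = 0$. Since $\lambda_a, \lambda_b > 0$, the values $\ell(v_a), \ell(v_b)$ must be nonzero and of opposite sign, so $v_a$ and $v_b$ lie on strictly opposite sides of $H$. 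Therefore $P = \conv(\{v_a, v_b\} \cup V(B)) = \conv([v_a,v_b]\cup B)$ meets the paper's definition of a generalized bipyramid.

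The two delicate ingredients, and the main places where care is needed, are: (i) the extremality-based lower bounds $|I_+|, |I_-| \geq 2$, which is exactly what allows the selection of two apex candidates carrying the same sign of $\lambda$ in the second case; and (ii) the sign calculation showing that \emph{same-sign} $\lambda_a, \lambda_b$ produce \emph{opposite-side} $v_a, v_b$ with respect to the hyperplane through the other $n$ vertices. Getting these signs right is the only real content of the argument; once they are in place, the rest is bookkeeping with the convex hull.
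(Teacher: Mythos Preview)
Your proof is correct. The paper takes a shorter, purely geometric route: assuming $P$ is not a pyramid, it fixes an arbitrary vertex $\mathbf{v}$, notes that the remaining $n+1$ vertices must be affinely independent (otherwise $P$ would be a pyramid over their convex hull) and hence span a simplex $S$, and then uses $\mathbf{v}\notin S$ to find a facet $F$ of $S$ with $\mathbf{v}$ and the opposite vertex $\mathbf{w}$ of $S$ on different sides of $\aff(F)$; this $F$ is the base of the generalized bipyramid. Your Radon-type argument is the algebraic mirror of the same idea: the condition $I_0\neq\emptyset$ is exactly ``$P$ is a pyramid'', and in the case $I_0=\emptyset$ your same-sign pair $v_a,v_b$ plays the role of the paper's pair $\mathbf{v},\mathbf{w}$ (writing $\mathbf{v}$ in barycentric coordinates with respect to $S$, a negative coordinate at $\mathbf{w}$ is equivalent to $\lambda_{\mathbf v}$ and $\lambda_{\mathbf w}$ sharing a sign in the unique affine relation). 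Your framing buys a clean, systematic case split and makes the sign calculation explicit; the paper's version buys brevity by hiding the affine dependence inside the facet-separation step.
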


\begin{proof}
If $P$ has $n+2$ vertices and is not a pyramid, then one may take a vertex $\mathbf{v}$ of $P$ and look at the simplex $S$ spanned by the remaining $n+1$ vertices. There must be a facet $F$ of $S$, such that $\mathbf{v}$ is on one side of $\aff(F)$, while the vertex of $S$ that is not contained in $F$ is on the other side of $\aff(F)$. If that would not be the case, then $\mathbf{v}$ would be contained in $S$, a contradiction.
Then, calling~$\mathbf{w}$ that vertex on the other side of such a facet~$F$ of~$S$, we obtain the line segment $[\mathbf{v},\mathbf{w}]$ that identifies $P$ to be a generalized bipyramid with base $F$.
\end{proof}

By modifying Rogers' proof of \cite[Lemma~1a]{Rog59} we obtain the following result.

\begin{lem}\label{lem5}
Let $P$ be an $n$-dimensional polytope with $n+2$ vertices, there is a cylinder $C$ inscribed in~$P$ such that
 $$V(P)=\left(1+\frac{1}{n-1}\right)^{n-1}V(C).$$
Moreover, the base of~$C$ is an $(n-1)$-dimensional polytope with $n$ or $n+1$ vertices. 
\end{lem}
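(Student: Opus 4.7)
The plan is to invoke Lemma~\ref{lem6} and treat separately the cases that $P$ is a pyramid and that $P$ is a generalized bipyramid. The gain over Rogers' original argument for general convex bodies is that for these polytopes the cross-section function along an appropriate direction takes the \emph{exact} form $c(1-|t|/h)^{n-1}$ (not merely some concave power function), so the inscribed cylinder can be constructed explicitly and the factor of~$n$ present in the bound of Macbeath--Rogers disappears.

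For the pyramid case, $P = \conv(\{\mathbf{v}\} \cup B)$ with~$B$ an $(n-1)$-dimensional polytope with $n+1$ vertices. After an affine change of coordinates we may assume $\mathbf{v} = \mathbf{e}_n$, $B \subseteq \mathbf{e}_n^\perp$, and that~$\mathbf{0}$ is an interior point of~$B$. A direct computation of the section at height~$t$ then gives $P \cap \{x_n = t\} = (1-t)B + t\mathbf{e}_n$ for $t \in [0,1]$, and since $\mathbf{0} \in B$, the family $\{(1-t)B\}_{t \in [0,1]}$ is nested under the homothety centered at~$\mathbf{0}$. Hence the cylinder $C_s = (1-s)B \times [0, s]$ (with segment along~$\mathbf{e}_n$) lies in~$P$ for every $s \in (0,1)$. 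The ratio $V(C_s)/V(P) = n s(1-s)^{n-1}$ is maximized at $s = 1/n$ and produces $V(P)/V(C) = (1 + 1/(n-1))^{n-1}$; the base $(1-1/n)B$ is affinely equivalent to~$B$, so it has $n+1$ vertices.

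For the generalized bipyramid case, $P = \conv([\mathbf{v},\mathbf{w}] \cup B)$, where the vertex count forces~$B$ to be an $(n-1)$-simplex. Let $\mathbf{p} = [\mathbf{v},\mathbf{w}] \cap \aff(B)$ and normalize affinely so that $\mathbf{v} = \mathbf{e}_n$, $\mathbf{w} = -\mathbf{e}_n$, and $\mathbf{p} = \mathbf{0}$ (so that $B \subseteq \mathbf{e}_n^\perp$). Writing a generic point of~$P$ as $\sum_i \alpha_i \mathbf{b}_i + \beta \mathbf{v} + \gamma \mathbf{w}$, the condition $x_n = 0$ forces $\beta = \gamma$, and because $\mathbf{v} + \mathbf{w} = \mathbf{0}$ the contribution $\beta(\mathbf{v}+\mathbf{w})$ drops out; this yields
$$P \cap \mathbf{e}_n^\perp = \conv(B \cup \{\mathbf{0}\}) =: B_0,$$
so that $P$ is an ordinary bipyramid over~$B_0$. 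The sections at height~$t \in [-1,1]$ are then $(1-|t|)B_0 + t\mathbf{e}_n$, and since $\mathbf{0} \in B_0$ the cylinder $C_s = (1-s)B_0 \times [-s, s]$ lies in~$P$. Optimizing $V(C_s)/V(P) = n s(1-s)^{n-1}$ at $s = 1/n$ again yields $V(P)/V(C) = (1 + 1/(n-1))^{n-1}$, with base affinely equivalent to~$B_0 = \conv(B \cup \{\mathbf{p}\})$; this has $n$ vertices when $\mathbf{p} \in B$ and $n+1$ vertices when $\mathbf{p} \notin B$, matching the statement.

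The main obstacle is identifying the correct ``base'' in the generalized bipyramid case: although~$P$ is presented in terms of~$B$, the relevant section of~$P$ parallel to~$\aff(B)$ is not~$B$ itself but the enlarged polytope $B_0 = \conv(B \cup \{\mathbf{p}\})$, which absorbs the hinge point~$\mathbf{p}$ of the segment~$[\mathbf{v},\mathbf{w}]$. Once one recognizes that $P$ is an ordinary bipyramid over~$B_0$, both cases collapse to the same one-parameter optimization of $s(1-s)^{n-1}$, and the sharp ratio $(1+1/(n-1))^{n-1}$ follows with no further loss.
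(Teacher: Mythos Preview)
Your proof follows the same route as the paper's: split via Lemma~\ref{lem6}, identify the $(n-1)$-dimensional cross-section over which $P$ is an ordinary (bi)pyramid (the paper's $P'$ and $P''$ are exactly your $B_0=\conv(B\cup\{\mathbf{p}\})$ and $B$), and inscribe the cylinder with base $(1-\tfrac1n)\cdot(\text{cross-section})$ and segment of length $\tfrac1n$ times the total height to obtain the exact ratio $(1+\tfrac{1}{n-1})^{n-1}$. One minor slip: in the generalized bipyramid case your affine normalization $\mathbf{v}=\mathbf{e}_n$, $\mathbf{w}=-\mathbf{e}_n$, $\mathbf{p}=\mathbf{0}$ is over-determined, since affine maps preserve ratios of collinear points and $\mathbf{p}$ need not be the midpoint of $[\mathbf{v},\mathbf{w}]$; the paper sidesteps this by placing $\mathbf{v}_0=\mathbf{0}$, $\mathbf{v}_1=\mathbf{e}_n$ and letting the base sit at a free height $\xi\in[0,1]$, but your optimization of $ns(1-s)^{n-1}$ goes through unchanged once you allow unequal apex heights $h_1,h_2$.
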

\begin{proof}
Let $P$ be an $n$-dimensional polytope with $n+2$ vertices $\{\mathbf{v}_{0},\mathbf{v}_{1},\ldots,\mathbf{v}_{n},\mathbf{v}_{n+1}\}$. By Lemma~\ref{lem6}, if $P$ is a simplicial polytope, then $P$ is a generalized bipyramid. Without loss of generality, let $H=\aff\{\mathbf{v}_{2},\ldots,\mathbf{v}_{n},\mathbf{v}_{n+1}\}$ be the hyperplane containing the base of~$P$, and assume that $\mathbf{v}_{0}$ and $\mathbf{v}_{1}$ are strictly separated by $H$, then $P=\conv\{P'\cup [\mathbf{v}_{0},\mathbf{v}_{1}]\}$, where $P'=\conv\{[\mathbf{v}_{0},\mathbf{v}_{1}] \cap H,\mathbf{v}_{2},\ldots,\mathbf{v}_{n},\mathbf{v}_{n+1} \}$. Clearly, $P'$ is an $(n-1)$-dimensional polytope with $n$ or $n+1$ vertices. After applying a suitable affine transformation, we may assume that $\mathbf{v}_{0}=(0,\ldots,0,0)$, $\mathbf{v}_{1}=(0,\ldots,0,1)$ and $P'\subseteq \{\mathbf{x} \in \mathbb{R}^n : x_{n}=\xi\}$ for some $0 \leq \xi \leq 1$. Let $$C=\conv\left\{\left((1-\frac{1}{n})P'+\frac{1}{n}\mathbf{v}_{1}\right)\bigcup\left((1-\frac{1}{n})P'\right)\right\} = \left(1-\frac{1}{n}\right)P' + [\mathbf{0},\frac{1}{n}\mathbf{v}_1].$$
By definition, $C$ is a cylinder inscribed in $P$ with volume 
$$V(C)=\frac{1}{n}V(P')\left(1-\frac{1}{n}\right)^{n-1}.$$
For $0\leq t\leq1$, let $P_{t}$ be the section of $P$ by the hyperplane $\{\mathbf{x}=(x_{1},\ldots,x_{n}) : x_{n}=t\}$.
Then,
$$V(P)=\int_{0}^{1}V(P_{t})dt=\frac{1}{n}V(P_{\xi})=\frac{1}{n}V(P')=\left(1+\frac{1}{n-1}\right)^{n-1}V(C).$$

If $P$ is not a simplicial polytope, then $P$ is a pyramid. Without loss of generality, all but one vertex, say $\mathbf{v_{0}}$, are contained in a hyperplane $H_{0}=\aff\{\mathbf{v}_{1},\mathbf{v}_{2},\ldots,\mathbf{v}_{n},\mathbf{v}_{n+1}\}$. After applying a suitable affine transformation, we may assume that $H_{0}=\{\mathbf{x} \in \mathbb{R}^n : x_{n}=0\}$ and $\mathbf{v}_{0}=(0,\ldots,0,1)$. Writing $P''=\conv\{\mathbf{v}_{1},\mathbf{v}_{2},\ldots,\mathbf{v}_{n},\mathbf{v}_{n+1}\}$ for the base of the pyramid $P$, we construct the cylinder $$C=\conv\left\{\left((1-\frac{1}{n})P''+\frac{1}{n}\mathbf{v}_{0}\right)\bigcup\left((1-\frac{1}{n})P''\right)\right\} = \left(1-\frac{1}{n}\right)P'' + [\mathbf{0},\frac{1}{n}\mathbf{v}_0]$$
inscribed in $P$ with volume
$$V(C)=\frac{1}{n}V(P'')\left(1-\frac{1}{n}\right)^{n-1}.$$
Thus, we have $V(P)=\frac{1}{n}V(P'')=\left(1+\frac{1}{n-1}\right)^{n-1}V(C)$ as claimed.
\end{proof}

\begin{proof}[\textbf{Proof of Theorem~\ref{thm3}}]
The proof goes along the same lines as the proof of Theorem~\ref{thm1}, so that we only describe where the specifics of the polytope $P$ come into play.

By Lemma~\ref{lem5}, $P$ contains a cylinder $C$ whose base is an $(n-1)$-dimensional polytope with $n$ or $n+1$ vertices, and whose volume is given by $V(P)=\left(1+\frac{1}{n-1}\right)^{n-1}V(C)$.
By an inductive argument on the base of $C$, we can construct a $k$-cylinder inscribed in $P$.  
Combined with Lemma~\ref{lem1}, we obtain the desired bound on the density $\theta_L(P)$.
\end{proof}

\begin{rem}
\label{rem1}
Let $P$ be an $n$-dimensional polytope with $n+2$ vertices in $\mathbb{R}^{n}$.
If $n=2$, then $P$ is affinely equivalent to a two-dimensional anti-blocking body. However, this is not always true in higher dimensions.
For example, consider the triangle $T = \conv\{(1,0),(0,1),(-1,-1)\}$ in $\mathbb{R}^2$ and the $4$-dimensional polytope $Q = \conv\{T \times \{0\}^2 , \{0\}^2 \times T\}$.
By construction $Q$ has $6$ vertices and each vertex is contained in exactly~$5$ edges.
In order for $Q$ to be affinely equivalent to an anti-blocking polytope, however, there would need to exist a vertex of degree four.
\end{rem}

\section*{Acknowledgements}
The work of the third author is supported by the National Natural Science Foundation of China (NSFC12201307).

\end{document}